\documentclass[12pt]{amsart}

\usepackage{amsmath,amssymb,amsthm,mathtools}

\usepackage{xcolor}
\newtheorem{theorem}{Theorem}[section]
\newtheorem{proposition}[theorem]{Proposition}
\newtheorem{lemma}[theorem]{Lemma}
\newtheorem{corollary}[theorem]{Corollary}
\newtheorem{definition}[theorem]{Definition}
\newtheorem{example}[theorem]{Example}
\theoremstyle{remark}

\newcommand{\sa}{\mathrm{sa}}

\title[Order isomorphisms on  positive cones of $C^*$-algebras]
{Order isomorphisms on positive cones of non-unital $C^*$-algebras}

\author{Osamu Hatori}
\address{Niigata University,
Niigata 950-2181, Japan}
\email{oppekepenguin@gmail.com}

\author{Shiho Oi}
\address{Department of Mathematics, Faculty of Science, Niigata University,
Niigata 950-2181, Japan}
\email{shiho-oi@math.sc.niigata-u.ac.jp}

\thanks{The second author was supported in part by JSPS KAKENHI Grant Number JP24K06754.
}
\dedicatory{Dedicated to Professor Ngai-Ching Wong on the occasion of his 65th birthday}
\subjclass[2020]{Primary 46L05; Secondary 47B49, 47B65}
\keywords{$C^*$-algebra, positive cone, order isomorphism, Jordan $^*$-isomorphism,
multiplier algebra, quasi-multiplier}

\begin{document}

\begin{abstract}
Let $A$ and $B$ be $C^*$-algebras, not necessarily unital, and let
$T:A_+\to B_+$ be a positively homogeneous order isomorphism.  We first
prove that $T$ is additive and extends uniquely to a bounded real-linear
order isomorphism between the self-adjoint parts of $A$ and $B$.  Passing
to the biduals, we then obtain a representation theorem for positively homogeneous order isomorphisms. Finally, we study surjective maps between the positive cones of $C^*$-algebras, again not necessarily unital, which preserve the norm of the arithmetic mean. This extends a previous result by removing the assumption that at least one of the algebras is unital.
\end{abstract}

\maketitle
\section{Introduction}

Let $A$ and $B$ be $C^*$-algebras, not necessarily unital.  We denote by $A_+$ and $B_+$ the sets of all positive elements of $A$ and $B$, respectively; these sets are called the positive cones of $A$ and $B$. The study of nonlinear maps on positive cones that determine the
Jordan structure of the underlying $C^*$-algebras has developed in
several directions. 
In the unital setting, Moln\'ar proved that every positively homogeneous order isomorphism between the sets of positive invertible elements of two $ C^*$-algebras is induced by a Jordan $*$-isomorphism and a positive invertible weight \cite[Proposition~13]{MolnarRenyi}. Recall that a bijection $T:A_+\to B_+$ is called a positively homogeneous order isomorphism if
\[
        a\le b\iff T(a)\le T(b),
        \qquad
        T(\lambda a)=\lambda T(a)
        \quad(a\in A_+,\ \lambda>0).
\]
A corresponding result for the entire positive cones of unital $C^*$-algebras was obtained by the authors.
\begin{theorem}\cite[Theorem~5.1]{HatoriOiPositiveCones}\label{unital}
    Let $A$ and $B$ be unital $C^{*}$-algebras. If $T: A_+\to B_+$ is a positively homogeneous order isomorphism, then there is a Jordan $*$-isomorphism $J:A \to B$ such that $T(a)=T(1)^{\frac{1}{2}}J(a)T(1)^{\frac{1}{2}}$ for any $a \in A_{+}$.  
\end{theorem}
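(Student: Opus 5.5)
Fix a positively homogeneous order isomorphism $T:A_+\to B_+$ between unital $C^*$-algebras. The plan is to normalize $T$ so that it is unital, show the normalized map is the restriction of a unital linear order isomorphism of the self-adjoint parts, and then invoke Kadison's theorem that a unital linear order isomorphism between $C^*$-algebras is a Jordan $*$-isomorphism.

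\emph{Invertibility of $T(1)$.} An element $a\in A_+$ is invertible if and only if for every $b\in A_+$ there is $\lambda>0$ with $b\le \lambda a$, i.e.\ $a$ is an order unit of $A_+$. This property is stated purely in terms of the order and positive scalars, so $T$ preserves it. Hence $T$ restricts to a positively homogeneous order isomorphism between the sets of positive invertible elements $A_{++}$ and $B_{++}$.

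\emph{Normalization.} Write $c=T(1)\in B_{++}$ and define
\[
S(a)=c^{-1/2}T(a)c^{-1/2}\qquad(a\in A_+).
\]
The map $x\mapsto c^{-1/2}xc^{-1/2}$ is a linear order automorphism of $B_+$, so $S$ is again a positively homogeneous order isomorphism, and $S(1)=1$. By Moln\'ar's result \cite[Proposition~13]{MolnarRenyi} applied to $S|_{A_{++}}$, there are a Jordan $*$-isomorphism $J:A\to B$ and a positive invertible $d$ with $S(a)=dJ(a)d$ on $A_{++}$. Since $S(1)=1$, we get $d^2=1$, so $d=1$ and $S=J$ on $A_{++}$. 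Equivalently, if one prefers not to use \cite{MolnarRenyi}: on $A_{++}$ the map $S$ preserves the order-unit (Thompson) structure and is additive. The additivity comes from characterizing $a+b$ order-theoretically, for instance via the quantities $\inf\{\lambda : x\le \lambda y\}$, which $S$ preserves. One then extends $S$ linearly to $A_{\sa}$ and applies Kadison.

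\emph{Passing to the boundary of the cone.} For a general $a\in A_+$ and $\varepsilon>0$ we have $a+\varepsilon 1\in A_{++}$. We need $S(a)=J(a)$. Monotonicity gives $S(a)\le S(a+\varepsilon)=J(a)+\varepsilon$. For the reverse inequality, use that $J^{-1}$ comes from the inverse map in the same way. Then $S(a)=\inf$ over the elements $S(b)$ with $b\in A_{++}$, $b\ge a$ in the order sense: for every $y\in B_{++}$ with $y\ge S(a)$ we have $S^{-1}(y)\ge a$ and $S^{-1}(y)\in A_{++}$, so $y=J(S^{-1}(y))\ge J(a)$. Taking $y=S(a)+\varepsilon$ yields $S(a)+\varepsilon\ge J(a)$ for all $\varepsilon$, so $S(a)\ge J(a)$. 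Symmetrically $J(a)+\varepsilon\ge S(a)$, hence $S(a)=J(a)$. Therefore $T(a)=c^{1/2}J(a)c^{1/2}$ for all $a\in A_+$.

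\emph{Main obstacle.} The crux is the additivity and identification on $A_{++}$, which is the step supplied by Moln\'ar's result. The boundary extension above is routine once we know that $S^{-1}$ also maps invertibles to invertibles.
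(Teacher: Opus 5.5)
Your proof is correct, but it follows a different route from the paper's. The paper does not reprove this theorem directly. It recovers it from its non-unital machinery:
\begin{itemize}
\item for each $u\in A_+\setminus\{0\}$ it applies Sch\"affer's linearization theorem on the order-unit space $X_u$, whose cone interior is $\{x:\alpha u\le x\le\beta u\}$;
\item an $\varepsilon u$-argument shows that the resulting linear map agrees with $T$ on the closed cone $C_u$, and taking $u=a+b$ gives additivity of $T$ on all of $A_+$;
\item $T$ is then extended to a bounded linear order isomorphism, and only afterwards is the Jordan structure obtained, from Kadison's theorem in Brown's form $\widetilde T=R_sJ$ with $s=\widetilde T^{**}(1_{A^{**}})$;
\item the unital case follows because $s=T(1)\in B=M(B)$.
\end{itemize}
You use unitality from the start. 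Invertible positives are exactly the order units, so $T$ maps $A_{++}$ onto $B_{++}$. Moln\'ar's invertible-cone theorem then supplies additivity and the Jordan $*$-isomorphism in one step. Your boundary step is essentially the paper's $\varepsilon u$-argument specialized to $u=1$. You run it with the globally defined positive map $J$ and the element $S(a)+\varepsilon 1\in B_{++}$, so, unlike the paper, you do not first need to check that the limiting element is positive.

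Your route is shorter and avoids biduals and quasi-multipliers. It depends, however, on the order unit $1$ existing and on Moln\'ar's result as a black box. The paper's route is what works when there are no invertible elements at all. It also shows why, in general, $J$ lands in $s^{-1/2}Bs^{-1/2}$ rather than in $B$.

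One caveat concerns your parenthetical alternative to Moln\'ar's result, where additivity on $A_{++}$ comes ``from characterizing $a+b$ order-theoretically'' via $\inf\{\lambda:x\le\lambda y\}$. As written, this is not an argument. Those quantities are indeed preserved by $S$, but additivity of a positively homogeneous order isomorphism is the genuinely nontrivial step, and it does not follow from that observation alone. In the paper it is supplied by Sch\"affer's theorem. Either cite that theorem there or drop the remark; your main line, which relies on Moln\'ar, is unaffected.
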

In Section \ref{section2}, we extend Theorem~\ref{unital} to the setting of general $C^*$-algebras (not assumed to be unital).

As an application of \cite[Proposition~13]{MolnarRenyi}, Dong, Li, Moln\'ar and Wong
\cite{DongLiMolnarWong} studied surjective transformations between positive cones of unital $C^*$-algebras. In Theorem 2.5 of \cite{DongLiMolnarWong}, they characterize surjective maps between the positive cones of $C^*$-algebras that preserve the norm of the arithmetic mean, under the assumption that at least one of the algebras is unital. It is natural to ask whether the unitality assumption is essential. In this paper, we show that it can be removed completely. More precisely, we establish the same characterization for arbitrary $C^*$-algebras, with neither algebra assumed to be unital. Thus, Theorem \ref{additive} extends Theorem 2.5 of \cite{DongLiMolnarWong} to the non-unital setting. 

We first fix the multiplier terminology.
\begin{definition}\label{def:QM}
Let $C$ be a $C^*$-algebra, canonically embedded in $C^{**}$, which is the bidual of $C$.  Its
multiplier algebra is
\[
        M(C)
        =
        \{x\in C^{**}:xC\subset C,\ Cx\subset C\},
\]
and its quasi-multiplier space is
\[
        QM(C)
        =
        \{x\in C^{**}:axb\in C
          \text{ for all }a,b\in C\}.
\]
Thus $M(C)\subset QM(C)$. 
The inclusion may be proper. It is well-known that $M(C)$ is a unital $C^{*}$-subalgebra of $C^{**}$. We denote the unit of $M(C)$ by $1_{C^{**}}$.  Unlike $M(C)$, the space $QM(C)$ need
not be closed under multiplication. We regard $C$, $M(C)$, and $QM(C)$ as subspaces of $C^{**}$. 
\end{definition}

\section{Positively homogeneous order isomorphisms}\label{section2}
We study positively homogeneous order isomorphisms on the positive cones of $C^{*}$-algebras.  The key input is Sch\"affer's order-unit linearization theorem \cite[Theorem~B]{SchafferGauge}, in the
form recalled in \cite[Theorem~5.2]{LGI}.
Fix $u\in A_+\setminus\{0\}$ and put
\[
 X_u=\{x\in A_{\sa}: -\lambda u\le x\le \lambda u
       \text{ for some }\lambda>0\},
 \qquad
 C_u=X_u\cap A_+.
\]
Then $X_u$ is a real partially ordered vector space with positive cone
$C_u$.  By the definition of $X_u$, the element $u$ is an order unit.
Moreover, $(X_u,C_u,u)$ is Archimedean.  Indeed, suppose that $x\in X_u$
and $y\in C_u$ satisfy $nx\le y$ for every $n\ge1$.  Since
$y\le\|y\|1_{A^{**}}$, we have
\[
        nx\le\|y\|1_{A^{**}} \qquad(n\ge1).
\]
If $x\not\le0$, then $\sup\sigma(x)>0$, and hence
$n\sup\sigma(x)\le\|y\|$ for every $n$, a contradiction.  Thus $x\le0$.
We write
\[
 \|x\|_u=\inf\{\lambda>0:-\lambda u\le x\le\lambda u\}
\]
for the corresponding order-unit norm.

\begin{lemma}\label{lem:interior}
The interior of $C_u$ with respect to the order-unit norm is
\[
 C_u^\circ
 =\{x\in A_+: \alpha u\le x\le\beta u
       \text{ for some }\alpha,\beta>0\}.
\]
\end{lemma}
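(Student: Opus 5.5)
We prove the two inclusions directly from the definition of the order-unit norm on $X_u$. Note first that $C_u\subset X_u$ and that, since $u$ is an order unit, $-\|x\|_u u\le x\le \|x\|_u u$ holds for every $x\in X_u$. For this we use the Archimedean property established above: for every $\lambda>\|x\|_u$ we have $x\le\lambda u$, hence $n(x-\|x\|_u u)\le u$ for all $n$, and therefore $x\le \|x\|_u u$. The lower bound is symmetric. The open ball of radius $r$ about $x$ in $X_u$ is then contained in $\{y\in X_u:\ x-ru\le y\le x+ru\}$, and it contains the set obtained by replacing $r$ with any smaller $r'<r$.

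For the inclusion ``$\supseteq$'', take $x\in A_+$ with $\alpha u\le x\le\beta u$ and $\alpha,\beta>0$. Then $x\in X_u$, because $-\beta u\le 0\le x\le \beta u$, so $x\in C_u$. Let $y\in X_u$ with $\|y-x\|_u<\alpha$. Then $y-x\ge -\|y-x\|_u u$, and hence
\[
y\ge x-\|y-x\|_u u\ge (\alpha-\|y-x\|_u)u\ge 0 .
\]
So $y\in A_+\cap X_u=C_u$. The ball of radius $\alpha$ about $x$ therefore lies in $C_u$, and $x$ is an interior point.

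For the inclusion ``$\subseteq$'', let $x$ be an interior point of $C_u$. Then $x\in C_u\subset A_+\cap X_u$, so $x\le\beta u$ for some $\beta>0$; we may take $\beta>0$ even if $\|x\|_u=0$. Choose $r>0$ so that every $y\in X_u$ with $\|y-x\|_u<r$ lies in $C_u$. Put $y=x-\tfrac r2 u$. This element is in $X_u$ and satisfies $\|y-x\|_u=\tfrac r2\|u\|_u\le \tfrac r2<r$, since $-u\le u\le u$. Hence $y\ge0$, that is, $x\ge\tfrac r2 u$, and we can take $\alpha=r/2$.

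The only delicate point is the Archimedean step, which ensures that the infimum defining $\|\cdot\|_u$ is attained as an order bound. Even this can be avoided: in the first inclusion, pick any $\lambda$ with $\|y-x\|_u<\lambda<\alpha$ and use the bound $-\lambda u\le y-x$ directly. The remaining steps are routine.
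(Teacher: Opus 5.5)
Your proof is correct and follows essentially the same route as the paper. For $\supseteq$ you bound $y-x$ from below by a multiple of $u$ smaller than $\alpha$, and for $\subseteq$ you test the ball with $y=x-\tfrac r2 u$. The only cosmetic difference is that you first use the Archimedean property to show the infimum defining $\|\cdot\|_u$ is attained, which lets you use radius $\alpha$. The paper avoids this by choosing $\delta<\alpha/2$ directly, exactly as in your closing remark.
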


\begin{proof}
Suppose first that $\alpha u\le x\le\beta u$ for some
$\alpha,\beta>0$.  If $\|y-x\|_u<\alpha/2$, choose
$\delta<\alpha/2$ such that
\[
        -\delta u\le y-x\le\delta u.
\]
Then $y\ge(\alpha-\delta)u\ge(\alpha/2)u\ge0$.  Thus
$x\in C_u^\circ$.

Conversely, if $x\in C_u^\circ$, then there is $\varepsilon>0$ such
that $x+y\in C_u$ whenever $\|y\|_u<\varepsilon$.  Taking
$y=-(\varepsilon/2)u$ gives
\[
 \frac\varepsilon2u\le x.
\]
Since $x\in X_u$, there is $\beta>0$ with $x\le\beta u$.
\end{proof}

\begin{lemma}\label{lem:span-interior}
We have $X_u=\operatorname{span}_{\mathbb R} C_u^\circ$. 
\end{lemma}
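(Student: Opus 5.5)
Using the description of $C_u^\circ$ given in Lemma~\ref{lem:interior}, the plan is to write every $x\in X_u$ explicitly as a difference of two elements of $C_u^\circ$.

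The inclusion $\operatorname{span}_{\mathbb R} C_u^\circ\subset X_u$ is immediate. By Lemma~\ref{lem:interior}, $C_u^\circ\subset C_u\subset X_u$, and $X_u$ is a real vector space, being defined by two-sided order bounds that are stable under sums and real scalar multiples. So the content is the reverse inclusion.

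For the reverse inclusion, take $x\in X_u$ and choose $\lambda>0$ with $-\lambda u\le x\le\lambda u$. Put
\[
 y=x+2\lambda u,\qquad z=\lambda u .
\]
Then $x=y-z$. We check that both pieces lie in $C_u^\circ$ using the criterion of Lemma~\ref{lem:interior}.
\begin{itemize}
\item For $z$: clearly $\lambda u\le z\le \lambda u$, and $z\in A_+$ because $u\in A_+$.
\item For $y$: from $-\lambda u\le x\le\lambda u$ we get $\lambda u\le y\le 3\lambda u$. Since $\lambda u\ge0$, this also gives $y\in A_+$. Self-adjointness of $y$ holds because $x\in A_{\sa}$.
\end{itemize}
Hence $x=y-z\in\operatorname{span}_{\mathbb R}C_u^\circ$; in fact $X_u=C_u^\circ-C_u^\circ$.

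There is no real obstacle here. The only points to watch are that the constants $\alpha,\beta$ in Lemma~\ref{lem:interior} must be strictly positive, which holds since $\lambda>0$, and that the case $x=0$ is covered, which it is because $\lambda>0$ can be chosen freely.
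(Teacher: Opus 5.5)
Your proof is correct and is essentially the paper's argument: you shift $x$ by a multiple of $u$ into $C_u^\circ$ using Lemma~\ref{lem:interior} and subtract that multiple. The only difference is that you use $2\lambda u$ where the paper uses $(\lambda+1)u$, which changes nothing.
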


\begin{proof}
Let $x\in X_u$.  Choose $\lambda>0$ with
$-\lambda u\le x\le\lambda u$.  Then
\[
 u\le x+(\lambda+1)u\le(2\lambda+1)u,
\]
so $x+(\lambda+1)u\in C_u^\circ$ by
Lemma~\ref{lem:interior}.  Also $(\lambda+1)u\in C_u^\circ$, whence
\[
 x=\bigl(x+(\lambda+1)u\bigr)-(\lambda+1)u
 \in\operatorname{span}_{\mathbb R}C_u^\circ.
\]
The reverse inclusion is immediate.
\end{proof}

\begin{theorem}\label{thm:extension}
Let $A$ and $B$ be $C^*$-algebras and let $ T: A_+\to B_+$ be a positively homogeneous order isomorphism. 
Then $T$ is additive on $A_+$ and extends uniquely to a real-linear
order isomorphism
\[
 \widetilde T:A_{\sa}\longrightarrow B_{\sa}.
\]
\end{theorem}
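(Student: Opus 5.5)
We plan to reduce the statement to Schäffer's linearization theorem applied to the order-unit spaces $(X_u,C_u,u)$ set up above, and then glue the local linear maps together. Fix $u\in A_+\setminus\{0\}$ and put $v=T(u)\in B_+\setminus\{0\}$. First we check that $T$ maps $C_u$ onto $C_v$ and $C_u^\circ$ onto $C_v^\circ$. If $0\le x\le\lambda u$, then $T(0)\le T(x)\le T(\lambda u)=\lambda v$. Here $T(0)=0$, because positive homogeneity gives $T(0)=T(\lambda 0)=\lambda T(0)$ for all $\lambda>0$. The inequalities $\alpha u\le x\le\beta u$ are likewise carried to $\alpha v\le T(x)\le\beta v$. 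The same argument applied to $T^{-1}$, which is again a positively homogeneous order isomorphism, gives surjectivity. By Lemma~\ref{lem:interior}, $T$ therefore restricts to a positively homogeneous order isomorphism between the interiors $C_u^\circ$ and $C_v^\circ$ of the Archimedean order-unit cones.

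Schäffer's theorem, in the form \cite[Theorem~5.2]{LGI}, says that a positively homogeneous order isomorphism between the interiors of the cones of Archimedean order-unit spaces extends to a linear order isomorphism of the spaces. Using Lemma~\ref{lem:span-interior}, $X_u=\operatorname{span}_{\mathbb R}C_u^\circ$, we obtain a linear order isomorphism $L_u:X_u\to X_v$ with $L_u=T$ on $C_u^\circ$. We then want $L_u=T$ on all of $C_u$. Let $x\in C_u$ and $\varepsilon>0$. Then $x+\varepsilon u\in C_u^\circ$, so
\[
T(x+\varepsilon u)=L_u(x)+\varepsilon v .
\]
Monotonicity of $T$ gives $T(x)\le L_u(x)+\varepsilon v$ for all $\varepsilon>0$, hence $T(x)\le L_u(x)$ by the Archimedean property in $X_v$. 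For the reverse inequality we apply the same reasoning to $T^{-1}$ and $L_u^{-1}$: $T^{-1}(T(x))\le L_u^{-1}(T(x))$, that is, $x\le L_u^{-1}T(x)$. Since $L_u$ is positive, $L_u(x)\le T(x)$. Thus $T$ is additive on each $C_u$.

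Global additivity follows at once. Given $a,b\in A_+$, put $u=a+b$. Then $a,b\le u$, so $a,b,a+b\in C_u$, and
\[
T(a+b)=L_u(a+b)=L_u(a)+L_u(b)=T(a)+T(b).
\]
We then define $\widetilde T(a-b)=T(a)-T(b)$ for $a,b\in A_+$. This is well defined by additivity: $a-b=c-d$ gives $a+d=b+c$, so $T(a)+T(d)=T(b)+T(c)$. The map $\widetilde T$ is real-linear, and it is surjective and injective because $T^{-1}$ extends in the same way and the two extensions are mutually inverse. It is an order isomorphism since $x\ge0$ iff $\widetilde T x\in T(A_+)=B_+$. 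Uniqueness holds because $A_{\sa}=A_+-A_+$.

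The main obstacle is the local step: verifying precisely that the hypotheses of \cite[Theorem~5.2]{LGI} are met. This means checking that $(X_u,C_u,u)$ and $(X_v,C_v,v)$ are Archimedean order-unit spaces, which was shown above, and that the theorem indeed applies to a map defined only on the interiors. We then need the boundary extension argument, which uses the Archimedean property. Everything afterward is bookkeeping. If boundedness is also wanted, it follows since a positive linear map between self-adjoint parts of $C^*$-algebras is automatically bounded.
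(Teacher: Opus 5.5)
Your proposal is correct and follows essentially the same route as the paper: Sch\"affer's theorem applied on $C_u^\circ$, extension to $C_u$ by an $\varepsilon u$-perturbation, additivity by taking $u=a+b$, and the standard extension to $A_{\sa}$. The only variation is in proving $L_u(x)\le T(x)$. You rerun the argument for $T^{-1}$ and $L_u^{-1}$ and then use positivity of $L_u$, whereas the paper first shows $\Phi_u(x)\in B_+$ and then applies $T^{-1}$ directly. If the cited form of Sch\"affer's theorem only gives linearity, the two facts you need (positivity of $L_u$, and $L_u^{-1}$ being the linearization of $T^{-1}|_{C_v^\circ}$) follow in one line from $L_u(x)+\varepsilon v\in C_v^\circ$ together with the spanning lemma.
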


\begin{proof}
Since $0$ is the least element of each positive cone and $T$ is an
order isomorphism, $T(0)=0$.  In particular, $u\ne0$ implies
$T(u)\ne0$.

Fix $u\in A_+\setminus\{0\}$.  By Lemma~\ref{lem:interior}, since $T$ and $T^{-1}$ are 
order-preserving and positively homogeneous, 
\[
 T(C_u^\circ)=C_{T(u)}^\circ.
\]
Hence
\[
 \Phi_u:=T|_{C_u^\circ}:C_u^\circ\longrightarrow C_{T(u)}^\circ
\]
is a positively homogeneous order isomorphism.  By Sch\"affer's
order-unit linearization theorem, in the form recalled in
\cite[Theorem~5.2]{LGI}, $\Phi_u$ is linear in the sense that it is the
restriction of a real-linear map on the span of $C_u^\circ$.  Since
$C_u^\circ$ and $C_{T(u)}^\circ$ span $X_u$ and $X_{T(u)}$, respectively,
by Lemma~\ref{lem:span-interior}, $\Phi_u$ extends uniquely to a
real-linear map
\[
 \Phi_u:X_u\longrightarrow X_{T(u)}.
\]
We claim that $\Phi_u(x)=T(x)$ for every $x\in C_u$.  
Fix $x\in C_u$.  We first prove that $\Phi_u(x)\in B_+$. As $x\in C_u$, $x+\varepsilon u\in C_u^\circ$ for every $\varepsilon>0$, and hence
\[
        \Phi_u(x)+\varepsilon\Phi_u(u)
        =\Phi_u(x+\varepsilon u)\in C_{T(u)}\subset B_+.
\]
Letting $\varepsilon\downarrow0$ and using the norm-closedness of $B_{+}$, we obtain $\Phi_u(x)\in B_+$. 
Next we prove $T(x)\le \Phi_u(x)$. 
Choose $\lambda>0$ with $0\le x\le\lambda u$.  For every
$\varepsilon>0$,
\[
 \varepsilon u\le x+\varepsilon u
 \le(\lambda+\varepsilon)u,
\]
so $x+\varepsilon u\in C_u^\circ$.  Therefore
\[
 T(x)\le T(x+\varepsilon u)
 =\Phi_u(x)+\varepsilon\Phi_u(u).
\]
Letting $\varepsilon\downarrow0$ and using the norm-closedness of $B_{+}$, we obtain 
$T(x)\le\Phi_u(x)$.

Since $\Phi_u(x)\in B_+$, we may apply $T^{-1}$.
From
\[
    0\le\Phi_u(x)
    \leq\Phi_u(x)+\varepsilon\Phi_u(u)
    =T(x+\varepsilon u)
\]
and the order preservation of $T^{-1}$, we obtain
\[
    T^{-1}(\Phi_u(x))\leq x+\varepsilon u
    \qquad(\varepsilon>0).
\]
Again letting $\varepsilon\downarrow0$ gives
$T^{-1}(\Phi_u(x))\le x$, and hence $\Phi_u(x)\le T(x)$.  Thus $T(x)=\Phi_u(x)$ for all $x\in C_u$.

Now let $a,b\in A_+$, not both zero, and put $u=a+b$.  Since
$a,b,a+b\in C_u$,
\[
 T(a+b)=\Phi_u(a+b)=\Phi_u(a)+\Phi_u(b)=T(a)+T(b).
\]
The case $a=b=0$ is trivial, so $T$ is additive on $A_+$.

For $x\in A_{\sa}$ write $x=a_1-a_2$ with $a_1,a_2\in A_+$ and set
\[
 \widetilde T(x)=T(a_1)-T(a_2).
\]
If $a_1-a_2=b_1-b_2$, then $a_1+b_2=b_1+a_2$, and additivity gives
\[
 T(a_1)+T(b_2)=T(b_1)+T(a_2),
\]
so the definition is independent of the chosen decomposition.
Additivity and positive homogeneity of $T$ imply real linearity of
$\widetilde T$.  The same construction applied to $T^{-1}$ shows that
$\widetilde T$ is bijective.

Finally, if $x\le y$, then $y-x\in A_+$ and therefore
\[
 \widetilde T(y)-\widetilde T(x)
 =\widetilde T(y-x)=T(y-x)\ge0.
\]
The same argument for $\widetilde T^{-1}$ shows that the order is
preserved in both directions.  Uniqueness follows because
$A_{\sa}=A_+-A_+$, so a real-linear extension is determined by its
values on $A_+$.
\end{proof}

We first record the boundedness of the extension and the order
properties of its second adjoint.

\begin{lemma}\label{lem:bounded-bidual}
The real-linear order isomorphism
$\widetilde T:A_{\sa}\to B_{\sa}$ obtained in
Theorem~\ref{thm:extension} is bounded and has bounded inverse.
Its complex-linear extension, denoted by the same symbol,
\[
    \widetilde T:A\longrightarrow B,\qquad
    \widetilde T(a+ib)=\widetilde T(a)+i\widetilde T(b)
    \quad(a,b\in A_{\sa}),
\]
is a bounded, $^*$-preserving complex-linear bijection with bounded
inverse. Moreover,
\[
    \widetilde T^{**}:A^{**}\longrightarrow B^{**}
\]
is a weak$^*$-continuous complex-linear order isomorphism, with
\[
    (\widetilde T^{**})^{-1}
    =(\widetilde T^{-1})^{**}.
\]
\end{lemma}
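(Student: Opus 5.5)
Boundedness of $\widetilde T$ is the only substantive point; I will deduce it from positivity, and everything else is formal. The classical fact is that a positive linear map between $C^*$-algebras is automatically bounded. I would prove it directly for real-linear $\widetilde T:A_{\sa}\to B_{\sa}$, since $\widetilde T$ maps $A_+$ into $B_+$. The proof is by contradiction. Suppose $\widetilde T$ is unbounded on $A_+$. Then there are $a_n\in A_+$ with $\|a_n\|\le 1$ and $\|\widetilde T(a_n)\|\ge 4^n$. Put
\[
a=\sum_n 2^{-n}a_n\in A_+,
\]
a norm-convergent series. For each $N$ we have $2^{-N}a_N\le a$, so positivity gives $0\le 2^{-N}\widetilde T(a_N)\le \widetilde T(a)$. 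In a $C^*$-algebra, $0\le x\le y$ implies $\|x\|\le\|y\|$, so $2^{N}\le\|\widetilde T(a)\|$ for every $N$, which is a contradiction. Hence $\|\widetilde T(a)\|\le M\|a\|$ for $a\in A_+$. Every $x\in A_{\sa}$ splits as $x=x_+-x_-$ with $\|x_\pm\|\le\|x\|$, so $\|\widetilde T(x)\|\le 2M\|x\|$. The same argument applied to $T^{-1}$, which is also a positively homogeneous order isomorphism, bounds $\widetilde T^{-1}$; by the uniqueness in Theorem~\ref{thm:extension}, its extension coincides with $(\widetilde T)^{-1}$.

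The complex-linear extension is well defined because every element of $A$ decomposes uniquely as $a+ib$ with $a,b\in A_{\sa}$. It is $^*$-preserving by construction: $\widetilde T((a+ib)^*)=\widetilde T(a)-i\widetilde T(b)=(\widetilde T(a+ib))^*$. It is bounded because $\|a\|,\|b\|\le\|a+ib\|$, giving a constant at most $4M$. It is bijective with inverse the analogous extension of $\widetilde T^{-1}$, which is bounded for the same reason.

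For the bidual, $\widetilde T^{**}$ is weak$^*$-continuous and complex-linear by general Banach space theory. Since $(S\circ R)^{**}=S^{**}\circ R^{**}$ and $\mathrm{id}^{**}=\mathrm{id}$, applying this to $\widetilde T$ and its inverse gives $(\widetilde T^{**})^{-1}=(\widetilde T^{-1})^{**}$. It remains to show that $\widetilde T^{**}$ and its inverse are positive. The adjoint $\widetilde T^*$ sends positive functionals on $B$ to positive functionals on $A$: for $\varphi\ge0$ and $a\in A_+$ we have $\varphi(\widetilde T(a))\ge0$. Now let $X\in A^{**}_+$. For every positive $\varphi\in B^*$,
\[
\langle \widetilde T^{**}X,\varphi\rangle=\langle X,\widetilde T^*\varphi\rangle\ge0 .
\]
An element of $B^{**}$ that is nonnegative on all positive functionals is self-adjoint and positive, since $B^*$ is spanned by positive functionals. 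Hence $\widetilde T^{**}X\ge0$. The same argument applied to $(\widetilde T^{-1})^{**}$ shows that $\widetilde T^{**}$ is an order isomorphism.

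I expect the automatic boundedness step to be the main obstacle. It is the only place where the $C^*$-structure is needed, namely through norm monotonicity on the positive cone; the rest is duality bookkeeping.
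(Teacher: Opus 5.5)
Your proposal is correct and follows essentially the same route as the paper: automatic boundedness on $A_+$ by contradiction, using a norm-convergent positive series and norm monotonicity on the positive cone. The extension then goes via $x=x_+-x_-$ and the real/imaginary decomposition, positivity of $\widetilde T^{**}$ comes from pairing with positive functionals of $B^*$, and the inverse identity comes from taking second adjoints of $\widetilde T^{-1}\widetilde T=\mathrm{id}_A$ and $\widetilde T\widetilde T^{-1}=\mathrm{id}_B$. The differences are cosmetic: your normalization ($\|a_n\|\le1$, weights $2^{-n}$, growth $4^n$), and your explicit appeal to the uniqueness in Theorem~\ref{thm:extension} to identify the extension of $T^{-1}$ with $\widetilde T^{-1}$, which the paper simply takes for granted.
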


\begin{proof}
Write $L=\widetilde T|_{A_{\sa}}$.
We first show that $L$ is bounded on the positive unit ball.
Otherwise, by positive homogeneity, we could choose $a_n\in A_+$
such that
\[
    \|a_n\|\le 2^{-n},
    \qquad
    \|L(a_n)\|\ge n
    \quad(n\ge1).
\]
The series $a=\sum_{n=1}^{\infty}a_n$ converges in norm to an
element of $A_+$, and $a_n\le a$ for every $n$.
Since $L$ is positive, this gives
\[
    0\le L(a_n)\le L(a),
\]
and hence
\[
    n\le\|L(a_n)\|\le\|L(a)\|
    \quad(n\ge1),
\]
a contradiction.

Thus there is a constant $M>0$ such that
$\|L(a)\|\le M\|a\|$ for every $a\in A_+$.
For $x\in A_{\sa}$, its positive and negative parts belong to
$A_+$ and satisfy $\|x_+\|,\|x_-\|\le\|x\|$.
Consequently,
\[
    \|L(x)\|
    \le\|L(x_+)\|+\|L(x_-)\|
    \le 2M\|x\|.
\]
The same argument applies to the positive real-linear map $L^{-1}$.

The complex-linear extension of $L$ is $^*$-preserving.
For $x=a+ib$, where $a,b\in A_{\sa}$, we have
$\|a\|,\|b\|\le\|x\|$, so
\[
    \|\widetilde T(x)\|
    \le\|L(a)\|+\|L(b)\|
    \le 2\|L\|\|x\|.
\]
The complex-linear extension of $L^{-1}$ is bounded by the same
argument and is the inverse of $\widetilde T$.
Both $\widetilde T$ and $\widetilde T^{-1}$ are positive.

We next verify positivity of the second adjoint.
Let $x\in A^{**}_+$ and $\varphi\in B^*_+$.
Since $\widetilde T$ is positive, the functional
$\widetilde T^*(\varphi)=\varphi\circ\widetilde T$ belongs to $A^*_+$.
Therefore,
\[
    \langle\widetilde T^{**}(x),\varphi\rangle
    =\langle x,\widetilde T^*(\varphi)\rangle
    \ge0.
\]
The characterization of the positive cone of $B^{**}$ by positive
functionals in $B^*$ now gives
$\widetilde T^{**}(x)\in B^{**}_+$.
The same argument shows that $(\widetilde T^{-1})^{**}$ is positive.

Finally, taking second adjoints of the identities
$\widetilde T^{-1}\widetilde T=\operatorname{id}_A$ and
$\widetilde T\widetilde T^{-1}=\operatorname{id}_B$ gives
\[
    (\widetilde T^{-1})^{**}\widetilde T^{**}
    =\operatorname{id}_{A^{**}},
    \qquad
    \widetilde T^{**}(\widetilde T^{-1})^{**}
    =\operatorname{id}_{B^{**}}.
\]
Thus $\widetilde T^{**}$ is an order isomorphism.
Its weak$^*$ continuity follows from its being the adjoint of
$\widetilde T^*$.
\end{proof}

Applying Kadison's theorem \cite{Kadison}, we have the following.
\begin{proposition}\cite[Remark 4.3(ii)]{BrownOrder}\label{BrownO}
Let $T\colon A\to B$ be a complex-linear order
isomorphism, and set
 $s:=T^{**}(1_{A^{**}})$. 
Then $s$ is positive and invertible, and $s^{-1}\in QM(B)$. There exists a Jordan
${*}$-isomorphism $J$ from $A$ onto $s^{-\frac{1}{2}}Bs ^{-\frac{1}{2}}$ 
such that \[
T=R_{s} J,
\]
where $ R_s:s^{-\frac{1}{2}}Bs ^{-\frac{1}{2}} \to B$ is defined by  $R_s(x):=s^{1/2}xs^{1/2}$. 

\end{proposition}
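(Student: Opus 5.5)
Since $T$ is a complex-linear order isomorphism, $T$ and $T^{-1}$ are positive, hence bounded and $^*$-preserving. By the argument of Lemma~\ref{lem:bounded-bidual}, $T^{**}:A^{**}\to B^{**}$ is then a weak$^*$-continuous order isomorphism between von Neumann algebras, with inverse $(T^{-1})^{**}$. My plan is to work at the bidual level, apply Kadison's theorem there, and then descend back to $A$ and $B$.

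\textbf{Step 1: $s$ is positive and invertible.} Positivity of $s=T^{**}(1_{A^{**}})$ follows from positivity of $T^{**}$. For invertibility, note that $1_{A^{**}}$ is an order unit of $A^{**}_{\sa}$: every $y\in B^{**}_{\sa}$ satisfies $-\|y\|1\le y\le\|y\|1$. Apply $(T^{-1})^{**}$ to get $\pm(T^{-1})^{**}(y)\le c\,1$ with $c=\|T^{-1}\|\|y\|$. Applying $T^{**}$ then gives $\pm y\le c\,s$. Taking $y=1_{B^{**}}$ yields $1_{B^{**}}\le c\,s$, so $s$ is invertible.

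\textbf{Step 2: Kadison's theorem.} Define
\[
\Psi(x)=s^{-1/2}T^{**}(x)s^{-1/2}\qquad(x\in A^{**}).
\]
This is a unital positive linear bijection of $A^{**}$ onto $B^{**}$, and its inverse $y\mapsto (T^{-1})^{**}(s^{1/2}ys^{1/2})$ is also positive. A unital order isomorphism between unital $C^*$-algebras is a Jordan $*$-isomorphism by Kadison's theorem \cite{Kadison}. Hence $\Psi$ is a Jordan $*$-isomorphism, and $T^{**}=R_s\Psi$ on $A^{**}$.

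\textbf{Step 3: descend to $A$.} Put $J=\Psi|_A$. Then $J(A)=s^{-1/2}T(A)s^{-1/2}=s^{-1/2}Bs^{-1/2}$, and $J$ is a Jordan $*$-isomorphism onto this set, which is Jordan-closed because $\Psi$ is. This gives $T=R_sJ$ on $A$.

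\textbf{Step 4: $s^{-1}\in QM(B)$.} This is the step I expect to be the main obstacle. Take $b_1,b_2\in B$. Write $s^{-1}=s^{-1/2}\,1_{B^{**}}\,s^{-1/2}$, so that
\[
s^{-1}=\Psi(1_{A^{**}})\,s^{-1}\ \text{up to the factorization}\ s^{-1/2}\Psi(1_{A^{**}})s^{-1/2}.
\]
I would use $R_s^{-1}(B)=J(A)$ and write $b_i=s^{1/2}J(a_i)s^{1/2}$. Then
\[
b_1s^{-1}b_2=s^{1/2}J(a_1)J(a_2)s^{1/2}.
\]
This is not obviously in $B$, since $J(a_1)J(a_2)$ need not lie in $J(A)$ for a Jordan map. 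The product must therefore be handled via the Jordan identity. The symmetric part is fine:
\[
J(a_1)J(a_2)+J(a_2)J(a_1)=2J(a_1\circ a_2)\in J(A).
\]
For the general product, I would reduce to the case $b_1=b_2^*=b$ by polarization. Then $bs^{-1}b^*\ge0$, and I would show it lies in $B$. One route is to take square roots, using $J(a)^2=J(a^2)$ for self-adjoint $a$ together with the decomposition of $B$ into self-adjoint parts; alternatively, I would decompose $J$ into a $*$-homomorphism part and a $*$-antihomomorphism part via a central projection of $B^{**}$ and treat each separately. Failing that, I would cite Brown's argument \cite[Remark~4.3(ii)]{BrownOrder} directly.
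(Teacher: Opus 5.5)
Your Steps 1--3 are correct. They follow the Kadison-on-the-biduals route the paper has in mind; the paper gives no argument of its own beyond citing Brown's Remark~4.3(ii) and Kadison's theorem. The gap is Step~4: you never establish $s^{-1}\in QM(B)$, and neither route you sketch closes it.

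\begin{itemize}
\item Splitting $b=h+ik$ into self-adjoint parts and using $J(a)^2=J(a^2)$ only produces the symmetric combinations $hs^{-1}k+ks^{-1}h$. The antisymmetric term $hs^{-1}k-ks^{-1}h$ is exactly what no Jordan identity reaches.
\item The central-projection route gives $b_1s^{-1}b_2=T(a_1a_2)z+T(a_2a_1)(1-z)$ with $z$ central in $B^{**}$. But multiplication by a central projection of $B^{**}$ does not map $B$ into $B$ in general, so this does not place the product in $B$.
\end{itemize}

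The falling-back option of citing Brown is what the paper does, but it is not a proof.

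The missing idea is that you never need the antisymmetric part: squares plus an approximate unit suffice. Take any $h\in B$ and put $x=s^{-1/2}hs^{-1/2}\in J(A)$. Since $J(A)$ is Jordan-closed, $x^2=x\circ x\in J(A)=s^{-1/2}Bs^{-1/2}$, and conjugating by $s^{1/2}$ gives $hs^{-1}h\in B$. Apply this with $h=e_\lambda$, where $(e_\lambda)$ is a contractive approximate unit of $B$. For $b_1,b_2\in B$, each $b_1(e_\lambda s^{-1}e_\lambda)b_2$ then lies in $B$, and
\[
\|b_1s^{-1}b_2-b_1e_\lambda s^{-1}e_\lambda b_2\|
\le\|s^{-1}\|\bigl(\|b_1-b_1e_\lambda\|\,\|b_2\|+\|b_1\|\,\|b_2-e_\lambda b_2\|\bigr)\longrightarrow0.
\]
By norm-closedness, $b_1s^{-1}b_2\in B$, so $s^{-1}\in QM(B)$. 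No polarization and no reduction to $bs^{-1}b^*$ is needed; the same kind of approximate-unit argument appears in the paper's proof of Lemma~\ref{equi}.
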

Fix $s \in QM(B)$ be a positive invertible element in $B^{**}$. Following \cite[Proposition~3.1]{BMS}, let $B^{s}$ be $B$ as a
complex vector space, equipped with the original involution and the
new multiplication \[
    a\mathbin{\bullet_s} b := asb
    \qquad a,b\in B.
\]
We endow $(B^s, \mathbin{\bullet_s})$  with the equivalent norm
\[
    \|a\|_s := \|s\|\,\|a\|.
\]
Then $(B^s, \mathbin{\bullet_s})$ is a Banach $*$-algebra. We define the Jordan product of $a, b \in  (B^s, \mathbin{\bullet_s})$ by $a \circ b=\frac{1}{2}(a\mathbin{\bullet_s}b+b\mathbin{\bullet_s} a)$. 
\begin{corollary} Every complex-linear order isomorphism $T\colon A\to B$ is a Jordan $*$-isomorphism from $A$ onto $(B^{s^{-1}}, \mathbin{\bullet_{s^{-1}}})$. 
\end{corollary}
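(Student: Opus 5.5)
The plan is to apply Proposition~\ref{BrownO} to $T$ and then rewrite the conjugation $R_s J$ as a Jordan isomorphism for the modified product. Put $s=T^{**}(1_{A^{**}})$. By Proposition~\ref{BrownO}, $s$ is positive and invertible with $s^{-1}\in QM(B)$. We also get a Jordan $^*$-isomorphism $J\colon A\to s^{-1/2}Bs^{-1/2}$ with
\[
T(a)=s^{1/2}J(a)s^{1/2}\qquad(a\in A).
\]
Here $s^{-1/2}Bs^{-1/2}$ is the Jordan $^*$-algebra inside $B^{**}$ carrying the Jordan product inherited from $B^{**}$. Since $s^{-1}\in QM(B)$, the construction of $B^{s^{-1}}$ preceding the corollary applies: for $x,y\in B$ we have $x\bullet_{s^{-1}}y=xs^{-1}y\in B$.

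First, $T$ is a complex-linear bijection from $A$ onto $B$, and $B^{s^{-1}}$ is $B$ as a vector space. So $T$ is a linear bijection $A\to B^{s^{-1}}$. It is $^*$-preserving because $J$ is a $^*$-map and $s^{1/2}$ is self-adjoint:
\[
T(a^*)=s^{1/2}J(a)^*s^{1/2}=\bigl(s^{1/2}J(a)s^{1/2}\bigr)^*=T(a)^*,
\]
and the involution on $B^{s^{-1}}$ is the original one.

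The key step is to check that $T$ preserves Jordan products, and it comes down to one computation. For $a,b\in A$,
\[
T(a)\bullet_{s^{-1}}T(b)=s^{1/2}J(a)s^{1/2}\,s^{-1}\,s^{1/2}J(b)s^{1/2}=s^{1/2}J(a)J(b)s^{1/2},
\]
where all products are taken in $B^{**}$. Symmetrizing gives
\[
T(a)\circ T(b)=s^{1/2}\bigl(J(a)\circ J(b)\bigr)s^{1/2}=s^{1/2}J(a\circ b)s^{1/2}=T(a\circ b).
\]
It is enough to verify this Jordan identity, or equivalently $T(a^2)=T(a)\bullet_{s^{-1}}T(a)$ for self-adjoint $a$ followed by polarization. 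Combined with bijectivity and $^*$-preservation, this proves the corollary.

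The main obstacle is bookkeeping rather than analysis. One has to make sure the Jordan structure on $s^{-1/2}Bs^{-1/2}$ in Proposition~\ref{BrownO} is the one inherited from $B^{**}$, so that $J(a)J(b)$ makes sense in $B^{**}$ and $J(a\circ b)=J(a)\circ J(b)$ holds there. One also has to note that $R_s$ is a linear bijection from $s^{-1/2}Bs^{-1/2}$ onto $B$ with inverse $x\mapsto s^{-1/2}xs^{-1/2}$; this is what makes $T=R_sJ$ onto $B$. If one wants the identification to be a normed one, boundedness of $T$ and of $T^{-1}$ is already available from Lemma~\ref{lem:bounded-bidual}, and the norm $\|\cdot\|_{s^{-1}}$ is equivalent to the original norm.
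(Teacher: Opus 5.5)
Your proposal is correct and follows essentially the paper's proof. Both apply Proposition~\ref{BrownO} to write $T(a)=s^{1/2}J(a)s^{1/2}$ and use $s^{1/2}s^{-1}s^{1/2}=1$ to get $T(a)\bullet_{s^{-1}}T(b)=s^{1/2}J(a)J(b)s^{1/2}$, so symmetrizing gives $T(a\circ b)=T(a)\circ T(b)$; your explicit checks of bijectivity onto $B$ and $^*$-preservation are details the paper leaves implicit.
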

\begin{proof}
    By Proposition \ref{BrownO}, there is a Jordan $*$-isomorphism $J$ such that $T(a)=s^{\frac{1}{2}}J(a)s^{\frac{1}{2}}$ for any $a \in A$. We have
    \begin{equation*}
\begin{split}
    s^{-\frac{1}{2}}T(a \circ b)s^{-\frac{1}{2}}&=J(a \circ b)=J(a) \circ J(b)
    =\frac{1}{2}(J(a)J(b)+J(b)J(a))\\
    &=\frac{1}{2} s^{-\frac{1}{2}}(s^{\frac{1}{2}}J(a)s^{\frac{1}{2}}s^{-1}s^{\frac{1}{2}}J(b)s^{\frac{1}{2}}+s^{\frac{1}{2}}J(b)s^{\frac{1}{2}}s^{-1}s^{\frac{1}{2}}J(a)s^{\frac{1}{2}})s^{-\frac{1}{2}}\\
    &=\frac{1}{2} s^{-\frac{1}{2}}(T(a)\circ T(b)+T(b)\circ T(a))s^{-\frac{1}{2}}.
\end{split}
    \end{equation*}
    For any $a,b\in A$, we have 
    $T(a\circ b)
    =
    T(a)\circ T(b)$.
\end{proof}

\begin{lemma}\label{equi}
     We have 
     \[
     J(A)=B \Longleftrightarrow  \widetilde{T}^{**}(1_{A^{**}}) \in M(B)
     \]
\end{lemma}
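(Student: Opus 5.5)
We work in the setting of Proposition~\ref{BrownO} applied to the complex-linear order isomorphism $\widetilde T\colon A\to B$ of Lemma~\ref{lem:bounded-bidual}. Put $s=\widetilde T^{**}(1_{A^{**}})$. Then $s$ is positive and invertible in $B^{**}$, and
\[
\widetilde T(a)=s^{1/2}J(a)s^{1/2}\qquad(a\in A),
\]
where $J\colon A\to s^{-1/2}Bs^{-1/2}$ is a Jordan $^*$-isomorphism onto that space. Hence $J(A)=s^{-1/2}Bs^{-1/2}$, and the statement becomes
\[
s^{-1/2}Bs^{-1/2}=B\iff s\in M(B).
\]
We prove the two implications separately.

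\emph{($\Leftarrow$)} Assume $s\in M(B)$. Since $M(B)$ is a unital $C^*$-subalgebra of $B^{**}$ and $s$ is invertible in $B^{**}$, spectral permanence shows that $s^{-1}\in M(B)$. By continuous functional calculus inside $M(B)$, both $s^{1/2}$ and $s^{-1/2}$ lie in $M(B)$. Because $B$ is a two-sided ideal of $M(B)$,
\[
s^{-1/2}Bs^{-1/2}\subset B
\qquad\text{and}\qquad
B=s^{-1/2}\bigl(s^{1/2}Bs^{1/2}\bigr)s^{-1/2}\subset s^{-1/2}Bs^{-1/2}.
\]
Therefore $J(A)=B$.

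\emph{($\Rightarrow$)} Assume $J(A)=B$. Then $J\colon A\to B$ is a surjective Jordan $^*$-isomorphism of $C^*$-algebras. Its bidual $J^{**}\colon A^{**}\to B^{**}$ is a weak$^*$-continuous Jordan $^*$-isomorphism of von Neumann algebras, so $J^{**}(1_{A^{**}})=1_{B^{**}}$. Taking biduals in $\widetilde T=R_sJ$ gives
\[
\widetilde T^{**}(x)=s^{1/2}J^{**}(x)s^{1/2}\qquad(x\in A^{**}).
\]
Both sides are weak$^*$-continuous and agree on $A$, which is weak$^*$-dense in $A^{**}$; multiplication by the fixed element $s^{1/2}$ is separately weak$^*$-continuous. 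Evaluating at $1_{A^{**}}$ is consistent with $s=s^{1/2}1_{B^{**}}s^{1/2}$, so this gives no new information, and membership $s\in M(B)$ has to be extracted from $B$ itself.

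Since $J(A)=B$, we have $\widetilde T(A)=s^{1/2}Bs^{1/2}$, while also $\widetilde T(A)=B$. Hence
\[
s^{1/2}Bs^{1/2}=B.
\]
The main obstacle is to pass from this two-sided invariance to the one-sided conditions $sB\subset B$ and $Bs\subset B$. The approach we would try first is via the Jordan structure. Apply the Corollary (the Jordan $^*$-isomorphism onto $(B^{s^{-1}},\bullet_{s^{-1}})$) to the map $\widetilde T\circ J^{-1}\colon B\to B$, $b\mapsto s^{1/2}bs^{1/2}$. Write
\[
\widetilde T(a)=s\circ J(a)+\text{(correction)}
\]
and use approximate units $(e_\lambda)$ of $B$: since $J$ is surjective, $J^{-1}(e_\lambda)$ is an approximate unit of $A$ converging weak$^*$ to $1_{A^{**}}$. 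For $b\in B$, the Jordan triple identity gives
\[
s^{1/2}\,\{e_\lambda,b,e_\lambda\}\,s^{1/2}\in B,
\]
and combining this with $b\mapsto s^{1/2}b^2s^{1/2}$ together with polarization should yield elements $sb+bs$ and $bsb$ in $B$.

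If that route stalls, we would fall back on an alternative. The map $b\mapsto s^{1/2}bs^{1/2}$ is a positive linear bijection of $B$ onto $B$ whose bidual sends $1$ to $s$. Its inverse $b\mapsto s^{-1/2}bs^{-1/2}$ then shows that $s$ and $s^{-1}$ are both in $QM(B)$. Finally, one would argue that a positive invertible quasi-multiplier $q$ with $q^{1/2}Bq^{1/2}=B$ satisfies $q^{1/2}\in M(B)$, by writing $q^{1/2}b=q^{1/2}\bigl(b\,q^{1/2}\bigr)q^{-1/2}$ and using a polar decomposition of $b$. We expect this last step to be the crux of the proof.
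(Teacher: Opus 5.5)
\textbf{There is a genuine gap in the forward direction.} Your ($\Leftarrow$) direction is correct and is the paper's argument.

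In ($\Rightarrow$) you correctly reduce to the two-sided invariance $s^{-1/2}Bs^{-1/2}=B$ (equivalently $s^{1/2}Bs^{1/2}=B$). The decisive step, passing from this to the one-sided inclusions, is never carried out, so the forward implication is not proved as written.
\begin{itemize}
\item The Jordan-triple route ends with ``should yield $sb+bs$ and $bsb$ in $B$''. That claim is unproved, and even granted, you would still need a further argument to get from $sb+bs\in B$ to $sb\in B$.
\item The fallback explicitly defers ``the crux'' to a polar-decomposition argument with no real content. The partial isometry in the polar decomposition of $b$ lies in $B^{**}$, not in $B$. The identity $q^{1/2}b=q^{1/2}(bq^{1/2})q^{-1/2}$ is a tautology and does not place anything in $B$.
\end{itemize}

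The missing idea is short. It works with $s^{-1}$ itself rather than a square root, because $s^{-1}$ is exactly the element that Proposition~\ref{BrownO} already tells you lies in $QM(B)$. Fix $b\in B$, let $(e_\lambda)$ be a contractive approximate identity of $B$, and put $x=bs^{-1}$.
\begin{itemize}
\item $xe_\lambda=bs^{-1}e_\lambda\in B$, because $s^{-1}\in QM(B)$.
\item Applying the invariance $s^{-1/2}Bs^{-1/2}\subset B$ twice gives
\[
x^*x=s^{-1}b^*bs^{-1}=s^{-1/2}\bigl(s^{-1/2}b^*bs^{-1/2}\bigr)s^{-1/2}\in B .
\]
\item Hence $\|x-xe_\lambda\|^2=\|(1-e_\lambda)x^*x(1-e_\lambda)\|\le 2\|(1-e_\lambda)x^*x\|\to 0$. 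Since $B$ is closed, $x\in B$.
\end{itemize}
Thus $Bs^{-1}\subset B$, and taking adjoints gives $s^{-1}B\subset B$. So $s^{-1}\in M(B)$, and inverse-closedness of the unital $C^*$-subalgebra $M(B)\subset B^{**}$ gives $s\in M(B)$.

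This also shows why aiming at $s^{\pm1/2}\in M(B)$ directly, as in your fallback, is awkward. With $x=bs^{-1/2}$ you still get $x^*x\in B$. But $xe_\lambda\in B$ would then require $s^{-1/2}\in QM(B)$, which you do not have.
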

\begin{proof}
   Let  $s:=\widetilde{T}^{**}(1_{A^{**}})$. By Proposition \ref{BrownO}, we have $s^ {-1}\in QM(B)$. \\
   ($\Rightarrow$) By Proposition \ref{BrownO}, we have $s^{-\frac{1}{2}}Bs ^{-\frac{1}{2}}=J(A)=B$. We write $h:=s^{\frac{1}{2}}>0$. Then we have $h^{-1}Bh^{-1}= B$. 
Let $(e_\lambda)_\lambda$ be a contractive approximate identity for
$B$. Fix $b\in B$ and put $x=bs^{-1}$. Since $s^{-1} \in QM(B)$, we have
$xe_\lambda=bs^{-1}e_{\lambda} \in B$ for every
$\lambda$. Furthermore, since $s^{-1}$ is positive, $ x^*x=s^{-1} b^*b s^{-1}=h^{-1}(h^{-1}b^{*}bh^{-1})h^{-1}\in B$. 
It follows that
\[
\begin{aligned}
    \lVert x-xe_\lambda\rVert^2
    &=\| (1-e_\lambda)x^*x(1-e_\lambda)\| \le 2\|(1-e_\lambda)x^{*}x\|
      \longrightarrow 0.
\end{aligned}
\]
Since $B$ is norm closed and $xe_\lambda \in B$, this shows that $x=bs^{-1} \in B$. Therefore,
$Bs^{-1}\subseteq B$. By a similar argument, we also have $s^{-1}B\subseteq B$. This implies $s^{-1}\in M(B)$, so $s \in M(B)$.  
\\
($\Leftarrow$) Applying Proposition \ref{BrownO}, we have $J(A)=s^{-\frac{1}{2}}Bs^{-\frac{1}{2}}$. Assume $s \in M(B)$. As $M(B)$ is a $C^{*}$-subalgebra of $B^{**}$, we have $s^{-\frac{1}{2}},  s^{\frac{1}{2}} \in M(B)$. Thus $s^{-\frac{1}{2}}Bs^{-\frac{1}{2}} \subset B$ and $s^{\frac{1}{2}}Bs^{\frac{1}{2}} \subset B$. This implies $J(A)=s^{-\frac{1}{2}}Bs^{-\frac{1}{2}}=B$.
\end{proof}

If $B$ is unital, then $\widetilde{T^{**}}(1_{A^{**}}) \in B=M(B)$. Thus Lemma \ref{equi} yields $J(A)=B$ holds. On the other hand, $J(A)=B$ may fail when the algebras are
non-unital.

\begin{example}\label{ex:failure}
Let $A$ be a $C^*$-algebra for which $QM(A)\ne M(A)$.
There is $s\in QM(A)\setminus M(A)$ such that $s$ is positive and invertible in $A^{**}$.  
Define $B=s^{1/2}As^{1/2}\subset A^{**}$. It is well-known that $B$ is a $C^*$-algebra; compare \cite[Corollary~3.3]{BMS}.

Define $T:A_+\to B_+$ by $T(a)=s^{1/2}as^{1/2}$. 
Then $T$ is a positively homogeneous order isomorphism.
The same formula defines a bounded complex-linear bijection $\widetilde T:A \to B$ by $\widetilde T(a)=s^{1/2}as^{1/2}$, whose restriction to $A_+$ is $T$. Since $s$ is invertible, its support projection is $1$.  By \cite[Proposition~3.2]{BMS}, the enveloping von Neumann algebra of
$\overline{s^{1/2}As^{1/2}}$ is therefore canonically $A^{**}$. 
Thus, under the canonical identification $B^{**}\cong A^{**}$, we have $\widetilde T^{**}(a)=s^{1/2}as^{1/2}$ for any $a \in A^{**}$. 
 Thus $\widetilde T^{**}(1_{A^{**}})=s$.
We claim that $s\notin M(B)$.
Suppose, to the contrary, that $s\in M(B)$.
Since $s$ is positive and invertible in $B^{**}$ and $M(B)$ is a
unital $C^*$-subalgebra of $B^{**}$, continuous functional calculus
gives $s^{1/2},s^{-1/2}\in M(B)$. 
Consequently,
\[
    s^{-1/2}Bs^{-1/2}\subset B
    \quad\text{and}\quad
    s^{1/2}Bs^{1/2}\subset B.
\]
Thus we get $s^{1/2}Bs^{1/2}=B$.  
On the other hand, the definition $B=s^{1/2}As^{1/2}$ gives
$A=B$ as subalgebras of the canonically identified biduals.
It follows that $s\in M(B)=M(A)$, contradicting the choice of $s$.
Therefore,
\[
    \widetilde T^{**}(1_{A^{**}})=s\notin M(B).
\]
\end{example}

Now we conclude the following, which is the generalization of Theorem \ref{unital}.
\begin{theorem}\label{thecollary}
Let $A$ and $B$ be $C^*$-algebras and let $ T: A_+\to B_+$ be a positively homogeneous order isomorphism with $\widetilde T^{**}(1_{A^{**}}) \in M(B)$. 
Then there is a Jordan $*$-isomorphism $J$ from $A$ onto $B$ such that 
\[
 T(a)=\widetilde T^{**}(1_{A^{**}})^{\frac{1}{2}}J(a)\widetilde T^{**}(1_{A^{**}})^{\frac{1}{2}}, \quad a \in A_{+}.
\]
\end{theorem}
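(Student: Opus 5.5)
The plan is to assemble the earlier results; no new analysis should be needed. First, apply Theorem~\ref{thm:extension} to $T$. This gives the unique real-linear order isomorphism $\widetilde T:A_{\sa}\to B_{\sa}$ extending $T$. By Lemma~\ref{lem:bounded-bidual}, its complex-linear extension $\widetilde T:A\to B$ is a bounded $^*$-preserving bijection. Both $\widetilde T$ and $\widetilde T^{-1}$ are positive, so $\widetilde T$ is a complex-linear order isomorphism from $A$ onto $B$. Moreover $\widetilde T^{**}$ is defined, and it is with respect to this map that $s:=\widetilde T^{**}(1_{A^{**}})$ is formed.

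Next, invoke Proposition~\ref{BrownO} for $\widetilde T$. It gives that $s$ is positive and invertible in $B^{**}$, that $s^{-1}\in QM(B)$, and that there is a Jordan $*$-isomorphism $J$ from $A$ onto $s^{-1/2}Bs^{-1/2}$ with $\widetilde T=R_sJ$, that is,
\[
\widetilde T(a)=s^{1/2}J(a)s^{1/2}\qquad(a\in A).
\]
The hypothesis $s\in M(B)$, together with Lemma~\ref{equi} (direction $\Leftarrow$), yields $s^{-1/2}Bs^{-1/2}=B$. So $J$ is a Jordan $*$-isomorphism from $A$ onto $B$.

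Finally, restrict to $A_+$. Since $\widetilde T|_{A_+}=T$, we get $T(a)=s^{1/2}J(a)s^{1/2}$ for $a\in A_+$, which is the claimed formula with $s=\widetilde T^{**}(1_{A^{**}})$.

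The only point needing care is that Proposition~\ref{BrownO} is applied to the complex-linear map $\widetilde T$ and not to $T$ itself. This requires that $\widetilde T$ be a genuine order isomorphism $A\to B$ in the sense of that proposition: bijective, complex-linear, and positive in both directions. Lemma~\ref{lem:bounded-bidual} supplies exactly these properties. Once this is checked, the argument is purely formal, and I do not expect a real obstacle.
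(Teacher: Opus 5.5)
Your proposal is correct and follows the paper's own argument, which combines Theorem~\ref{thm:extension}, Proposition~\ref{BrownO} and the direction ($\Leftarrow$) of Lemma~\ref{equi}, then restricts $\widetilde T=R_sJ$ to $A_+$. Your explicit appeal to Lemma~\ref{lem:bounded-bidual}, to check that the complex-linear extension $\widetilde T$ is a genuine order isomorphism $A\to B$ before applying Proposition~\ref{BrownO}, is a step the paper leaves implicit, and it is the right justification.
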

\begin{proof}
    Applying Theorem \ref{thm:extension}, Proposition \ref{BrownO}, and Lemma \ref{equi}, we obtain the theorem.
\end{proof}

Suppose that $QM(B)=M(B)$. Let $T: A_{+} \to B_{+}$ be a positively homogeneous order isomorphism. Let $s:=\widetilde{T}^{**}(1_{A^{**}})$.  By Proposition~\ref{BrownO}, it follows that $s\in M(B)$.
Lemma~\ref{equi} then yields $J(A)=B$. Various classes of $C^*$-algebras $B$ satisfying $QM(B)=M(B)$ are known (see~\cite{BrownQM} for
further details). Among these, we now describe several cases in which the precise form
of the positively homogeneous
order isomorphisms can be determined. 
\begin{example}[The commutative case]\label{comm}
    Let $C_0(X)$ and $C_0(Y)$ be commutative $C^{*}$-algebras.  Every positively homogeneous order isomorphism $T$ from $C_0^+(X)$ onto $C_0^+(Y)$
has the form
\[
        T(f)(y)=\alpha(y)f(\tau(y)),
\]
where $\tau:Y\to X$ is a homeomorphism and $\alpha:Y\longrightarrow[\delta,\infty)$
is bounded and continuous for some $\delta>0$.
\end{example}
Shibata, Matsuzaki and Miura give another proof of Example \ref{comm} in  \cite[Theorem~1.1]{ShibataMatsuzakiMiura}. 

Since the algebras of all compact operators are self-adjoint standard operator algebras, it follows from \cite[Corollary~2]{MolnarJordan}.
\begin{example}[The algebra of compact operators]
   Let $K(H_1)$ and $K(H_2)$ be the algebras of all compact operators on the Hilbert spaces $H_1$ and $H_2$, respectively. Every positively homogeneous order isomorphism $T$ from $K(H_1)_{+}$ onto $K(H_2)_{+}$
has the form
\[
    T(a)=s^{\frac{1}{2}}UaU^*s^{\frac{1}{2}},  \  a\in K(H_1)
    \quad\text{or}\quad
    T(a)=s^{\frac{1}{2}}Ua^{\mathrm t}U^*s^{\frac{1}{2}},
    \ a\in K(H_1),
\]
where $U: H_1\to H_2$ is a unitary operator and $s \in B(H_2)$ is a positive invertible operator. Note that  $a^t$ denotes the transpose of $a \in K(H_1)$ with respect to a fixed orthonormal basis of $H_1$.
\end{example}

\section{Preservers of the norm of the arithmetic mean}
The following lemma is an  extension of an order characterization originating in Dong, Li, Molnár, and Wong \cite[Lemma~2.6]{DongLiMolnarWong}. Their result treats the case $r=0$ for $C^*$-algebras, and the basic idea of the proof below follows the argument used there. 
\begin{lemma}\label{norm0}
    Let $C$ be a $C^{*}$-algebra. For $a,b\in C_+$ and $r\geq0$, we have
   \[ a-b\leq r1_{C^{**}} \quad\Longleftrightarrow\quad \|a+c\|\leq\|b+c\|+r \quad \text{for all }c\in C_+. \] 
\end{lemma}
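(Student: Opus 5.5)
The two implications need different tools. The forward one is a norm estimate in $C^{**}$. For the converse, I would test the norm inequality against elements $c$ built to make $\|a+c\|$ large, then pass to a state and use a Cauchy--Schwarz type argument to recover $\varphi(a-b)\le r$.

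\emph{Forward direction.} Assume $a-b\le r1_{C^{**}}$ and fix $c\in C_+$. Then $a+c\le b+c+r1_{C^{**}}$ in $C^{**}$. Since $0\le b+c\le\|b+c\|1_{C^{**}}$, we get
\[
0\le a+c\le(\|b+c\|+r)1_{C^{**}}.
\]
Because the embedding $C\subset C^{**}$ is isometric, this yields $\|a+c\|\le\|b+c\|+r$.

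\emph{Converse, reduction.} Assume the norm inequality holds for all $c\in C_+$, and suppose $a-b\not\le r1_{C^{**}}$. It suffices to show $\varphi(a-b)\le r$ for every state $\varphi$ of $C$, viewed as a normal state on $C^{**}$. Indeed, $a-b\in C_{\sa}$, and in the unitization $\tilde C$ the relation $x\le r1$ is tested by states of $\tilde C$. These are convex combinations of the extension of states of $C$ and the character vanishing on $C$; the latter gives $0\le r$ trivially. Moreover, $1_{\tilde C}$ corresponds to $1_{C^{**}}$ under $\tilde C\subset C^{**}$. So I fix a state $\varphi$, or more conveniently a pure state, with GNS triple $(\pi,H,\xi)$. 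I want to show $\langle\pi(a-b)\xi,\xi\rangle\le r$.

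\emph{Converse, construction of $c$.} I would follow Dong--Li--Moln\'ar--Wong: choose $c=t\,e$ with $t\to\infty$, where $e\in C_+$ has $\|e\|=1$ and $\pi(e)\xi\approx\xi$. By Kadison transitivity for pure states, or by a suitable approximate unit element, one can arrange $e$ to act almost like the rank-one projection onto $\xi$. Write $P$ for that projection. Perturbation theory for the top of the spectrum of $tP+\pi(a)$ gives
\[
\|a+te\|\ge t+\langle\pi(a)\xi,\xi\rangle+o(1)
\]
as $t\to\infty$, since the largest eigenvalue of $tP+X$ is $t+\langle X\xi,\xi\rangle+O(1/t)$. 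For $b$ the matching upper bound $\|b+te\|\le t+\langle\pi(b)\xi,\xi\rangle+o(1)$ needs a genuine norm estimate, not merely a vector estimate. Inserting both into the hypothesis and letting $t\to\infty$ gives $\varphi(a)\le\varphi(b)+r$, hence $\varphi(a-b)\le r$ for all pure states. By Krein--Milman this extends to all states, which is enough.

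\emph{Main obstacle.} The hard step is the upper bound for $\|b+te\|$: $e$ must be chosen so that in every representation, not just $\pi$, $b+te$ is nearly $t$ plus a compression concentrated where $\varphi$ lives. My first attempt would use a hereditary-subalgebra trick. Take $e=f_\epsilon(h)$ for some $h\in C_+$ that peaks at $\varphi$ (excision of pure states, Akemann--Anderson--Pedersen). Excision gives $\|e(b-\varphi(b))e\|<\epsilon$. Then a $2\times2$ block estimate splits $b+te$ into the corner $e\cdot e$ plus off-diagonal terms, whose contribution is $O(\|b\|^2/t)$. This is the delicate part. If it resists, I would instead handle the $b$-side by taking the supremum over states of $\psi(b+te)$ and splitting $\psi$ according to $\psi(e)$.
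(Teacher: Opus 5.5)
Your forward direction is the same as the paper's. The converse, however, is not proved. Everything rests on the upper bound $\|b+te\|\le t+\varphi(b)+o(1)$, which you explicitly leave open, and the mechanism you suggest does not work as stated. Since $e$ is not a projection, there is no $2\times2$ block decomposition of $b+te$ ``into the corner $e\cdot e$ plus off-diagonal terms''; you need a genuine projection.

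On the lower side there is a second problem. An $e$ with merely $\pi(e)\xi\approx\xi$ loses $t(1-\varphi(e))$, which does not vanish as $t\to\infty$ for fixed $e$. You should take $\|e\|=1=\varphi(e)$ exactly; then $\|a+te\|\ge\varphi(a)+t$ is immediate, and no eigenvalue perturbation is needed.

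Your route can be completed as follows.
\begin{itemize}
\item Choose such an $e$ that also satisfies $\|e(b-\varphi(b))e\|<\epsilon$ (excision of pure states).
\item Let $q=\chi_{[1-\delta,1]}(e)\in C^{**}$. Then $0\le b+te\le b+tq+t(1-\delta)(1-q)$.
\item Writing $q=ge=eg$ with $\|g\|\le(1-\delta)^{-1}$ gives $\|q(b-\varphi(b))q\|\le\epsilon(1-\delta)^{-2}$.
\item Bound the norm of the resulting $2\times2$ operator matrix by that of the scalar matrix of block norms. This gives $\|b+te\|\le t+\varphi(b)+\epsilon(1-\delta)^{-2}+\|b\|^2/(t\delta-\|b\|)$.
\item Let $t\to\infty$, then $\epsilon\to0$, to get $\varphi(a-b)\le r$.
\end{itemize}
Krein--Milman should be applied to the quasi-state space, whose extreme points are the pure states and $0$; this is harmless because $r\ge0$. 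None of these steps is in your proposal.

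The idea you are missing is the paper's: let $c$ depend on $b$, so that $\|b+c\|$ is known exactly rather than estimated. Put $t=\|a+b\|$ and $h=(a+b)/t$. Then $th^{1/n}\ge th=a+b\ge b$, so $c_n=th^{1/n}-b\in C_+$ and $\|b+c_n\|=t$. The hypothesis gives $0\le a-b+th^{1/n}\le(t+r)1_{C^{**}}$. Since $h^{1/n}\to p$ strongly, where $p$ is the support projection of $a+b$, you get $a-b+tp\le(t+r)1_{C^{**}}$. Compressing by $p$, which satisfies $pa=a$ and $pb=b$, gives $a-b\le rp\le r1_{C^{**}}$. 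This uses no states, GNS representations, transitivity or excision, which is why the paper's proof takes only a few lines.
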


\begin{proof}
    ($\Leftarrow$) Let $a,b\in C_+$ be such that $a+b\neq0$. Let $t=\|a+b\|$ and $h=\frac{a+b}{t}$. The support projection of $h$, denoted by $p$, is the smallest projection $p\in C^{**}$ such that $ph=h=hp$. As $0\leq h\leq1$, we have 
    \[ 
    th^{1/n}\geq th=a+b\geq b.
    \]
    This implies 
\[ c_n=th^{1/n}-b\in C_+,\qquad \|b+c_n\|=t. \]
Thus we have $ 0\leq a+th^{1/n}-b\leq(t+r)1_{C^{**}}$ and $h^{1/n}\to p$ in the strong operator topology. Hence we obtain 
\[ a+tp-b\leq(t+r)1_{C^{**}}. \]
Multiplying this inequality on both sides by $p$, we get
\[ a-b\leq rp\leq r1_{C^{**}} \]
since $pa=ap=a$ and $pb=bp=b$. The case $a+b=0$ is immediate. 
 ($\Rightarrow$) If $a-b\leq r1_{C^{**}}$ holds, then $a+c \leq b+c+r1_{C^{**}}$ for any $c \in C_{+}$. Thus we have 
 \[
 \|a+c\| \leq \|b+c+r1_{C^{**}}\|=\|b+c\|+r.
 \]

 \end{proof}
\begin{proposition}\label{norm1}
 Let $C$ be a $C^{*}$-algebra. For $a,b\in C_+$, we have
 \[
  \|a-b\| = \sup_{c\in C_+} \bigl|\|a+c\|-\|b+c\|\bigr|
 \]
\end{proposition}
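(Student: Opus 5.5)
We will derive the formula from Lemma~\ref{norm0} by applying it twice, once in each direction, and using the fact that for a self-adjoint element the norm is the smallest $r\ge0$ satisfying $-r1_{C^{**}}\le x\le r1_{C^{**}}$. Put $x=a-b\in C_{\sa}$ and
\[
S:=\sup_{c\in C_+}\bigl|\|a+c\|-\|b+c\|\bigr|.
\]

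\emph{The inequality $S\le\|a-b\|$.} Set $r=\|a-b\|$. In $C^{**}$ we have $-r1_{C^{**}}\le a-b\le r1_{C^{**}}$. The implication ($\Rightarrow$) of Lemma~\ref{norm0}, applied to the pair $(a,b)$, gives $\|a+c\|\le\|b+c\|+r$ for all $c\in C_+$. Applied to the pair $(b,a)$, using $b-a\le r1_{C^{**}}$, it gives $\|b+c\|\le\|a+c\|+r$. Together these yield $\bigl|\|a+c\|-\|b+c\|\bigr|\le r$ for every $c\in C_+$, so $S\le r$. In particular $S$ is finite.

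\emph{The inequality $\|a-b\|\le S$.} By the definition of $S$, for every $c\in C_+$ we have both
\[
\|a+c\|\le\|b+c\|+S\quad\text{and}\quad\|b+c\|\le\|a+c\|+S.
\]
The implication ($\Leftarrow$) of Lemma~\ref{norm0}, with $r=S\ge0$, then gives $a-b\le S1_{C^{**}}$ and $b-a\le S1_{C^{**}}$. Hence $-S1_{C^{**}}\le a-b\le S1_{C^{**}}$ in $C^{**}$. Since $a-b$ is self-adjoint, this means $\sigma(a-b)\subset[-S,S]$, and therefore $\|a-b\|\le S$.

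\emph{The only point needing care} is this last step: passing from an order inequality in $C^{**}$ to a norm bound in $C$. The embedding $C\subset C^{**}$ is an isometric $*$-homomorphism, so the spectrum of $a-b$ is the same in $C^{**}$ (or its unitization) as in $C$. For a self-adjoint element of a unital $C^*$-algebra, $-S1\le y\le S1$ is equivalent to $\|y\|\le S$. With that, the two inequalities combine to give $\|a-b\|=S$.
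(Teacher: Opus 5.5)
Your proof is correct and follows the paper's argument: the key step is applying the ($\Leftarrow$) direction of Lemma~\ref{norm0} with $r=S$ to both $(a,b)$ and $(b,a)$, which gives $-S1_{C^{**}}\le a-b\le S1_{C^{**}}$, and this is exactly what the paper does. The only cosmetic difference is that the paper gets $S\le\|a-b\|$ directly from the reverse triangle inequality $\bigl|\|a+c\|-\|b+c\|\bigr|\le\|a-b\|$, rather than from the ($\Rightarrow$) direction of the lemma.
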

\begin{proof}
    Since  we have
    \[
    \bigl|\|a+c\|-\|b+c\|\bigr| \le \|a-b\|
    \]
   for any $c \in C_{+}$, we get 
    \[
 \sup_{c\in C_+} \bigl|\|a+c\|-\|b+c\|\bigr| \le \|a-b\|.
 \]
 Let $r:=\sup_{c\in C_+} \bigl|\|a+c\|-\|b+c\|\bigr|$. Then we have
 \[
 \|a+c\|-\|b+c\| \le r \quad \text{and} \quad \|b+c\|-\|a+c\| \le r
 \]
 for any $c \in C_{+}$. By Lemma \ref{norm0}, we get
 \[
 a-b \le r1_{C^{**}} \quad \text{and} \quad b-a \le r1_{C^{**}}.
 \]
 Thus $\|a-b\| \le r$.
\end{proof}

Throughout this paper, $A$ and $B$ are not assumed to be unital. Thus the following theorem is a generalization of \cite[Theorem 2.5]{DongLiMolnarWong}. 
\begin{theorem}\label{additive}
    Let $A$ and $B$ be $C^{*}$-algebras. Assume that $T:A_{+} \to B_{+}$ is a surjective map. Then $T$ satisfies
    \begin{equation}\label{ad1}
        \|T(a)+T(b)\|=\|a+b\|, \quad a, b \in A_{+}
    \end{equation}
    if and only if there is a Jordan $*$-isomorphism $J:A \to B$ which extends $T$.
\end{theorem}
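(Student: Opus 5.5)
The easy direction comes first. If $J:A\to B$ is a Jordan $*$-isomorphism extending $T$, then $J$ is isometric and maps $A_+$ onto $B_+$. Hence $\|T(a)+T(b)\|=\|J(a+b)\|=\|a+b\|$. For the converse, the plan is to show that $T$ is a positively homogeneous order isomorphism with $\widetilde T^{**}(1_{A^{**}})=1_{B^{**}}$. The Jordan $*$-isomorphism is then supplied by Proposition~\ref{BrownO} together with Lemma~\ref{equi} (equivalently, Theorem~\ref{thecollary} with $s=1$).

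\emph{Step 1: basic properties.} Putting $a=b$ in \eqref{ad1} gives $\|T(a)\|=\|a\|$, so $T(0)=0$ and $T$ preserves the norm on $A_+$. By surjectivity, every $c'\in B_+$ has the form $T(c)$ for some $c\in A_+$. Lemma~\ref{norm0} applied in $A$ and in $B$ then gives, for every $r\ge0$,
\[
a-b\le r1_{A^{**}}\iff \|a+c\|\le\|b+c\|+r\ \forall c\in A_+\iff \|T(a)+c'\|\le\|T(b)+c'\|+r\ \forall c'\in B_+\iff T(a)-T(b)\le r1_{B^{**}}.
\]
With $r=0$ this says $a\le b\iff T(a)\le T(b)$. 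In particular $T$ is injective, since $T(a)=T(b)$ forces $a\le b\le a$. So $T$ is an order isomorphism. Proposition~\ref{norm1} similarly shows $\|T(a)-T(b)\|=\|a-b\|$.

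\emph{Step 2: positive homogeneity.} I expect this to be the main obstacle. The idea is to characterize $\lambda a$ intrinsically in terms of order and norm. For $\lambda>1$, $b=\lambda a$ should be the unique element with $b\ge a$, $\|b\|=\lambda\|a\|$ and $\|b-a\|=(\lambda-1)\|a\|$, but uniqueness fails in general. A more robust route is to use the full equivalence above with all $r$: it says $T$ preserves the quantities $\inf\{r\ge 0: a-b\le r1\}$, i.e.\ the ``one-sided sup-distances'' $\sup\sigma(a-b)$ (clipped at $0$), viewed in the bidual. The task is then to show $T(\lambda a)=\lambda T(a)$. I would try to show that an order isomorphism of $A_+$ onto $B_+$ preserving these one-sided distances is automatically positively homogeneous, in two stages.

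The first stage is to show $T$ restricted to each cone $C_u$ is ``affine along rays.'' Concretely, $T$ preserves $\|\cdot\|$ together with $\le$, and the order interval $[0,\lambda a]$ is carried onto $[0,T(\lambda a)]$. The second stage compares $T(\lambda a)$ with $\lambda T(a)$ through functionals: for a pure state $\varphi$ of $B$ with $\varphi(T(\lambda a)-\lambda T(a))$ extremal, the preserved quantities $\sup\sigma(\lambda a - b)$ and $\sup\sigma(b-\lambda a)$, taken over suitable test elements $b$ (for instance $b=\mu a$ and $b=a+c$), should force equality. If this direct route stalls, the fallback is to reproduce the Dong--Li--Moln\'ar--Wong argument inside the hereditary subalgebras $\overline{aAa}$, which have the approximately unital structure needed there.

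\emph{Step 3: conclusion.} Once homogeneity holds, Theorem~\ref{thm:extension} and Lemma~\ref{lem:bounded-bidual} provide $\widetilde T$ and $\widetilde T^{**}$. Put $s=\widetilde T^{**}(1_{A^{**}})$. Since $\|T(a)\|=\|a\|$, we have $T(a)\le 1_{B^{**}}$ whenever $a\le 1_{A^{**}}$. Running an approximate unit of $A$ through $\widetilde T^{**}$, which is weak$^*$ continuous, gives $s\le1$. The same argument applied to $T^{-1}$, which has the same norm property, gives $s^{-1}\le 1$ via Proposition~\ref{BrownO} ($T^{-1}=J^{-1}R_{s}^{-1}$). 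Hence $s=1\in M(B)$. Theorem~\ref{thecollary} now gives a Jordan $*$-isomorphism $J$ of $A$ onto $B$ with $T=J$ on $A_+$.
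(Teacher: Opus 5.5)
\textbf{Gap in Step 2.} Your Steps 1 and 3 follow the paper closely: the order isomorphism comes from Lemma~\ref{norm0} plus surjectivity, the isometry from Proposition~\ref{norm1}, and then $s=1_{B^{**}}$ with Theorem~\ref{thecollary}. But Step 2 is a genuine gap. Positive homogeneity is the heart of the converse, and it is never actually proved.
\begin{itemize}
\item The ``affine along rays'' stage only restates the goal.
\item In the pure-state comparison, the test elements $b=\mu a$ involve the unknown values $T(\mu a)$, so the argument is circular. Nothing is given that would force $\varphi(T(\lambda a)-\lambda T(a))=0$.
\item The fallback does not help. The Dong--Li--Moln\'ar--Wong theorem needs a \emph{unital} algebra, whereas $\overline{aAa}$ is in general only approximately unital. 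Every $C^*$-algebra is approximately unital, so you would face exactly the original difficulty.
\end{itemize}
As your own non-uniqueness remark shows, preserving norms, order and the one-sided quantities $\max\{0,\sup\sigma(a-b)\}$ does not by itself determine $T(\lambda a)$.

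\textbf{The missing idea.} It is a Mazur--Ulam type midpoint argument on order intervals, and it uses only what you established in Step 1.
\begin{itemize}
\item For $a\le b$ in $A_+$, $T$ maps $[a,b]=\{x\in A_+: a\le x\le b\}$ isometrically onto $[T(a),T(b)]$.
\item Both sets are bounded and invariant under the point reflections $R_1(x)=a+b-x$ and $R_2(y)=T(a)+T(b)-y$.
\item Let $f$ be any surjective self-isometry of $[a,b]$ and $z=\frac{a+b}{2}$. Then $f^{-1}R_1f$ is again one, and $\|f^{-1}R_1f(z)-z\|=\|R_1f(z)-f(z)\|=2\|f(z)-z\|$.
\item Let $\delta$ be the supremum of $\|f(z)-z\|$ over all such $f$; it is finite by boundedness. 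The previous identity gives $2\delta\le\delta$, so $\delta=0$ and $f(z)=z$.
\item Apply this to $f=T^{-1}R_2T$. It yields $T\bigl(\frac{a+b}{2}\bigr)=\frac{T(a)+T(b)}{2}$ for comparable $a,b$.
\item With $a=0$ you get $T(c/2)=T(c)/2$. Since $kc$ is the midpoint of $(k-1)c\le(k+1)c$, the recursion $T((k+1)c)=2T(kc)-T((k-1)c)$ gives integer multiples, hence dyadic ones.
\item Continuity of the isometry $T$ then gives $T(\lambda c)=\lambda T(c)$ for all $\lambda>0$.
\end{itemize}
This argument needs neither a unit nor interior points of the cone.

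\textbf{A minor point in Step 3.} Deriving $s^{-1}\le 1_{B^{**}}$ from Proposition~\ref{BrownO} needs a bidual version of $T=R_sJ$, which that proposition does not state. There is a simpler route:
\begin{itemize}
\item Apply your first argument to $T^{-1}$ to get $s':=(\widetilde T^{-1})^{**}(1_{B^{**}})\le 1_{A^{**}}$.
\item Then apply the positive map $\widetilde T^{**}$, whose inverse is $(\widetilde T^{-1})^{**}$ by Lemma~\ref{lem:bounded-bidual}. This gives $1_{B^{**}}=\widetilde T^{**}(s')\le s$.
\end{itemize}
The paper does the equivalent thing directly. It writes a contractive approximate unit of $B$ as $f_\mu=T(a_\mu)$ with $\|a_\mu\|\le 1$, so $f_\mu\le s$, and passes to the weak$^*$ limit.
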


\begin{proof}
Suppose there is a Jordan $*$-isomorphism $J:A \to B$ such that $T=J$ on $A_{+}$. Then we get
\[
 \|T(a)+T(b)\|=\|J(a)+J(b)\|=\|J(a+b)\|=\|a+b\|, \quad a, b \in A_{+}.
\]
We now prove the converse. Applying Proposition \ref{norm1}, for any $a, b \in A_{+}$, we have
\[ \begin{aligned} \|T(a)-T(b)\| &=\sup_{z\in B_+} \bigl|\|T(a)+z\|-\|T(b)+z\|\bigr|\\ &=\sup_{c\in A_+} \bigl|\|a+c\|-\|b+c\|\bigr|=\|a-b\|. \end{aligned}  \]
Thus $T:A_{+} \to B_{+}$ is an isometry. Taking $a=b=0$ in \eqref{ad1} gives $T(0)=0$. Hence $T$ is a bijection.  

Let $a, b \in A_{+}$ with $a \le b$. By Lemma \ref{norm0}, we have $\|a+c\| \le \|b+c\|$ for any $c \in A_{+}$. Thus $\|T(a)+T(c)\|\le \|T(b)+T(c)\|$ for any $c \in A_{+}$. Since $T$ is bijective and Lemma \ref{norm0}, we have $T(a) \le T(b)$. Conversely, the same argument shows that $T(a)\le T(b)$ implies
$a\le b $. Therefore, $T$ is also an order isomorphism. Fix $a, b \in A_{+}$ with $a<b$. We write $[a, b]:=\{ x \in A_{+} \mid a \le x \le b\}$. Define $R_1:[a, b] \to [a, b]$ by $R_1(x)=a+b-x$.  Then $R_1$ is an onto isometry. For any onto isometry $f$ on $[a, b]$, since $f^{-1}R_af$ is also a surjective isometry, we get 
\[
\left\|f^{-1}R_af\left(\frac{a+b}{2}\right)-\frac{a+b}{2}\right\|=2\left\|f\left(\frac{a+b}{2}\right)-\frac{a+b}{2}\right\|.
\]
If $f(\frac{a+b}{2}) \neq \frac{a+b}{2}$, then this contradicts the boundedness of $[a, b]$.  Thus $f\left(\frac{a+b}{2}\right)=\frac{a+b}{2}$ holds for any onto isometry $f$ on $[a, b]$. We define $R_{2}(x)=T(a)+T(b)-x$ for any $x \in [T(a), T(b)]:=\{x \in B_{+} \mid T(a) \le x \le T(b)\}$. Then $R_2$ is an onto isometry from $[T(a), T(b)]$ onto itself. As $T$ is a isometric order isomorphism, $T|_{[a,b]}: [a,b] \to [T(a), T(b)]$ is an onto isometry. Hence $T^{-1}R_2T$ is an onto isometry from $[a,b]$ onto itself.  Thus $T^{-1}R_{2}T\left(\frac{a+b}{2}\right)=\frac{a+b}{2}$. This implies
\[
T\!\left(\frac{a+b}{2}\right) =\frac{T(a)+T(b)}2.
\]
By a routine argument, we see that $T$ is a positively homogeneous
order isomorphism.
By Theorem~\ref{thm:extension} and
Lemma~\ref{lem:bounded-bidual}, $T$ extends to a bounded
complex-linear order isomorphism
$\widetilde T:A\to B$, and
$\widetilde T^{**}:A^{**}\to B^{**}$ is an order isomorphism.
Set
\[
    s:=\widetilde T^{**}(1_{A^{**}}).
\]
Let $(e_\lambda)_\lambda$ be a positive
contractive approximate identity for $A$. Since $e_\lambda \to 1_{A^{**}}$
and  
$\widetilde T^{**}$ is weak$^*$-continuous. We have
\[
T(e_\lambda)=\widetilde{T}^{**}(e_\lambda) \xrightarrow{w^*} \widetilde{T}^{**}(1_{A^{**}})=s.
\]
As $T$ is an order isomorphism and isometry with $T(0)=0$, we have $0 \le T(e_\lambda) \le 1_{B^{**}}$ for each $\lambda$.  
Since the positive cone of $B^{**}$ is weak$^*$ closed, we get
\[
0\leq s \leq 1_{B^{**}}.
\]
Now let $(f_\mu)_\mu$ be a positive contractive approximate identity for
$B$. As $T$ is surjective, for each $\mu$ there exists
$a_\mu\in A_+$ such that $f_\mu=T(a_\mu)$.
Since $T$ is an isometry and $T(0)=0$, we obtain
$\| a_\mu\|=\| f_\mu\| \leq 1$.
Thus, in $A^{**}$, we have $0\leq a_\mu\leq 1_{A^{**}}$. 
Since $\widetilde{T}^{**}$ is an order isomorphism, we have
\[
0\leq f_\mu=T(a_\mu)=\widetilde{T}^{**}(a_\mu)
\leq \widetilde{T}^{**}(1_{A^{**}})=s.
\]
Since $f_\mu\xrightarrow{w^*}1_{B^{**}}$ and the positive cone of
$B^{**}$ is weak$^*$ closed, we get 
\[
1_{B^{**}}\leq s.
\]
Therefore, we conclude that $\widetilde{T}^{**}(1_{A^{**}})=1_{B^{**}} \in M(B)$. 
By Theorem \ref{thecollary}, there is a Jordan $*$-isomorphism $J:A \to B$ such that  $T(a)=J(a)$ for any $a \in A_{+}$. 
\end{proof}

\subsection*{Declaration on the Use of Generative AI}

ChatGPT (OpenAI) was used during the preparation of this manuscript for language
editing, organization of the exposition, and discussion of mathematical
arguments.  The authors independently checked all mathematical statements,
proofs, and references, edited the resulting text, and take full responsibility
for the content of the manuscript.

\end{document}